\numberwithin{equation}{section}
\newtheorem{thm}{Theorem}[section]
\newtheorem{lem}[thm]{Lemma}
\newtheorem{defin}[thm]{Definition}
\newtheorem{rem}[thm]{Remark}
\newcommand{\re}{\mathbb{R}}
\begin{document}
\title{Determination of the order of fractional derivative for
subdiffusion equation}

%    Information for first author
\author{Ravshan Ashurov}
\author{Sabir Umarov}
%    Address of record for the research reported here
\address{Institute of Mathematics,
Uzbekistan Academy of Science, Tashkent, 81 Mirzo Ulugbek str.
100170}
\email{ashurovr@gmail.com}
\address{
University of New Haven, Department of Mathematics and Physics, 300 Boston Post Rd, West Haven, CT 06516}
 \email{sumarov@newhaven.edu}

\small

\title[Determination of the order of fractional derivative]
{Determination of the order of fractional derivative for
subdiffusion equation}
\vspace{2cm}

\begin{abstract}

The identification of the right order of the equation in applied fractional modeling plays an important role.
In this paper we consider an inverse problem for determining the order of time fractional
derivative in a subdiffusion equation with an arbitrary  second
order elliptic differential operator.  We prove that the additional information about the solution at a fixed time instant at a monitoring location, as "the observation
data", identifies uniquely the order of the fractional derivative.

\vskip 0.3cm \noindent {\it AMS 2000 Mathematics Subject
Classifications} :
Primary 35R11; Secondary 74S25.\\
{\it Key words}: Subdiffusion equation, Riemann-Liouville
derivatives, inverse and initial-boundary value problem,
determination of order of derivative, Fourier method.
\end{abstract}

\maketitle
\section{Introduction}

 It is well known (see, for example, \cite{Mach} - \cite{Zh1}) that Brownian motion,
 discovered in the first half of the 19th century, models motions of molecules in gases,
 electrons in semiconductors, neutrons in nuclear reactors, and much more.  The main difference between
 Brownian motion and processes obeying Newton's laws is that the diffusion
 packet spreads according to the law $t^{1 / 2}$ (and not like $t$).
 The subdiffusion process is characterized
 by a fractional exponent $\rho \in (0, 1)$, which is included in the diffusion
 equation as the order of the fractional time derivative.

 The theory of differential equations with fractional
 derivatives has gained considerable popularity and importance in the past
 few decades, mainly due to its applications in numerous seemingly distant
 fields of science and technology (see, for example, \cite{Mach} - \cite{Gor}).  In turn,
 the mathematical aspects of fractional differential equations and methods
 for solving them have been studied by many authors (see, for example, \cite{SU2} - \cite{LiLiu}).

 By inverse problems in the theory of partial differential equations are commonly
 called problems in which, together with solving a differential equation,
 it is also necessary to determine a coefficient(s) of the
 equation or/and the right side (source function).
 Naturally, in this case additional information should be given to find a new unknown function.
 Note that the interest in studying inverse problems
 for equations of mathematical physics is due to the importance of their
 applications in many branches of modern science, including mechanics, seismology, medical tomography, epidemics,
 and geophysics, just to mention a few. A significant number of studies are devoted to inverse problems of
 determining the right-hand side of subdiffusion equations
 (see, for example, \cite{Mach}, \cite{Zh2} - \cite{Mal} and references therein).

The present paper is devoted to the other important
type of inverse problems, namely to determining of the order of fractional
derivative in a subdiffusion equation, which is considered to
govern the anomaly of diffusion. More precisely, this inverse problems is
the determination of the unknown order of time-derivative in order
to match available data such as $u(x_0,t)$, $0<t<T,$ at a
monitoring point $x_0\in \Omega$. One of the practical example is a modeling of COVID-19 outbreak. The data \cite{JH} presented by Johns Hopkins University about the outbreak from different countries seem to show fractional order dynamical processes, in which the identification of fractional order rate of change is a key issue \cite{YQChen,Khan}. 

The problem of identification of fractional order of the model was considered by some researchers.
Note that all the publications assumed the fractional derivative of order $0<\rho< 1$ in the sense of Caputo
and studied mainly the uniqueness problem.
In paper \cite{Che} by J. Cheng et al.  an inverse
problem for determining the order of the Caputo fractional
derivative and a coefficient of one-dimensional time-fractional
diffusion equation is studied. The authors attached the homogeneous Neumann
boundary condition and the initial value given by the Dirac delta
function. They proved that the order of derivative and the unknown
coefficient are uniquely determined by the known datum $u(0,t), 0<t<T$. The
uniqueness of a solution of the two parameter inverse problem is
considered in paper  \cite{Tar} by Tatar and Ulusoy for the
differential equation 
\[
\partial_t^\rho u(t,x) = - (-\triangle)^\gamma u(t,x), \quad t>0, \ x \in \re^N.
\] 
The multi-term time-fractional diffusion equation and
distributed order fractional diffusion equations considered in
papers Li et al. \cite{LiY} and \cite{LiL}, correspondingly.

In paper \cite{Zhe} by X. Zheng et al.  the authors tried to
solve the most difficult problem of determining the variable order
of the Caputo fractional differentiation.  In this work, as in many
other papers, only the question of uniqueness is considered.  But
in our opinion, Lemma 4.1 of this paper is questionable, since there
exist functions (see, for example, \cite{AAP}) whose Fourier
series converge to zero in a certain region, but not all Fourier
coefficients are zero.

The following two papers \cite{Jan} and \cite{Hat} deals with the existence problem. J. Janno \cite{Jan} considered
a one-dimensional time-fractional diffusion equation with Caputo
derivatives. Giving an extra boundary condition $B u(\cdot, t)=
h(t), 0<t<T$ the author succeeded to prove the existence theorem
for determining the order of the derivative and the kernel of the
integral operator in the equation. The complexity of the proof of
the existence can be seen from the statement of corresponding
theorem (Theorem 7. 2 is formulated on more than one journal page).
In the paper of Hatano et al. \cite{Hat}  the
equation $\partial_t^\rho u =\Delta u,$ where $\Delta$ is the Laplace operator, is considered with the Dirichlet boundary
condition and the initial function $\varphi(x)$. They proved
that if $\varphi\in C_0^\infty(\Omega)$ and $\Delta
\varphi(x_0)\neq 0$, then
$$\rho =\lim\limits_{t\rightarrow
0}\big[t\partial_t u(x_0,t)[u(x_0,t)-\varphi(x_0)]^{-1}\big]$$.

In the recent survey paper \cite{LiLiu} by Z. Li et al.  in the section of
Open Problems they noted: "The studies on inverse
problems of the recovery of the fractional orders are far from
satisfactory since all the publications either assumed the
homogeneous boundary condition or studied this inverse problem by
the measurement in $t\in (0,\,\infty)$. It would be interesting to
investigate inverse problem by the value of the solution at a
fixed time as the observation data".

In the present work we address this problem. Namely, as
follows from our main result, in the case of the initial-boundary
(Neumann) value problem for the equation with the Riemann-Liouville
fractional derivative
\[
\partial^\rho_t u(x,t)=\Delta u(x,t), \quad x\in \Omega \subset \re^N, \  t>0,
\]
the only condition 
\begin{equation}
\label{t0} \int\limits_{\Omega} u(x,t_0) dx= d_0 \neq 0, 
\end{equation}
where $t_0 \ge 1$ is an observation time, recovers the order $\rho \in (0,1),$ and if we have two pairs of solutions
$\{u_1(x,t), \rho_1\}$ and $\{u_2(x,t), \rho_2\}$, then
$u_1(x,t)\equiv u_2(x,t)$ and $\rho_1=\rho_2$.

The paper is organized as follows. In the next section we formulate the main result. In Section \ref{section_fp} we prove the existence of a unique solution to the forward problem. This result will be used to prove the main result in Section \ref{ip}. Throughout the paper we assume that the fractional order $\rho$ of the main equation is constant. The solution of the forward problem is obtained under this assumption. The Cauchy problem in the case of piece-wise constant $\rho$ was studied in \cite{UmarovSteinberg} when $x \in \re^N.$  Modifying the result of this paper to the case of bounded domain $\Omega,$ one can extend the main result presented in this paper to the case of piece-wise constant $\rho,$ as well.

%%%%%%%%%%%%%%%%%%%%%%%%%
%%%%%%%%%%%%%%%%%%%%%%%%%

\section{Main result}
\label{section_mr}

\setcounter{section}{2} \setcounter{equation}{0}

Let $\Omega$ be an arbitrary $N$-dimensional domain with twice differentiable boundary $\partial\Omega.$  Namely, the
functions, defining the boundary equation in
local coordinates, are  continuously twice differentiable.
Let the second order differential operator
\[
A(x,D)u=\sum\limits_{i,j=1}^N D_i[a_{i,j}(x)D_ju]-c(x)u
\]
be a symmetric elliptic operator in $\Omega$, i.e.
\[
a_{i,j} (x) = a_{j,i}(x)\quad \text{and} \quad \sum_{i,j=1}^N
a_{i,j}(x) \xi_i \xi_j\geq a\sum_{i,j=1}^N  \xi^2_i,
\]
for all $x\in \Omega$ and  $\xi_i$, where $a=const
>0$ and $D_ju=\frac{\partial u}{\partial x_j}, \ j=1, \dots, N.$

Consider the spectral problem
\begin{equation}
\label{seq}
-A(x,D)v(x) =\lambda v(x), \quad x\in \Omega;
\end{equation}
\begin{equation}
\label{sbo}
Bv(x)\equiv\frac {\partial}{\partial n}v(x)+h(x)v(x)  = 0, \quad
x\in \partial \Omega,
\end{equation}
where ${n}$ is an external normal to the surface $\partial \Omega$.
It is known (see, for example, \cite{Il}, p. 100, \cite{Lad}, p. 111), that if $c(x)\geq 0$ and
\begin{align}
& a_{i, j}(x)\in C^{\lfloor \frac{N}{2}\rfloor +1}(\overline{\Omega}), \
i,j=1,\dots,N, \notag \\
&c(x)\in C^{\lfloor \frac{N}{2}\rfloor}(\overline{\Omega}), \quad h(x) \in
C^{\lfloor \frac{N}{2} \rfloor +2}(\partial \Omega), \label{coe}
\end{align}
where $\lfloor a \rfloor$ stands for the integer part of $a$, then the
corresponding inverse operator is compact, i.e. spectral problem
(\ref{seq}) - (\ref{sbo}) has a complete in $L_2(\Omega)$ set of
orthonormal eigenfunctions $\{v_k(x)\}$ and a countable set of
nonnegative eigenvalues $\{\lambda_k\}$.

The fractional part of our equation will be defined through  the Riemann-Liouville fractional derivative $\partial_t^\rho$ of order $0<\rho
< 1$. To define the Riemann-Liouville fractional derivative, one can define the fractional
integration of order $\rho<0$ of a function $f$ defined on $[0,
\infty)$ 
by the formula
\[
\partial_t^\rho f(t)=\frac{1}{\Gamma
(-\rho)}\int\limits_0^t\frac{f(\xi)}{(t-\xi)^{\rho+1}} d\xi, \quad
t>0,
\]
provided the right-hand side exists. Here $\Gamma(z)$ is
Euler's gamma function. Using this definition one can define the
Riemann-Liouville fractional derivative of order $\rho$,
$k-1<\rho\leq k$, $k\in \mathbb{N}$, as (see, for example,
\cite{SU2,PSK})
$$
\partial_t^\rho f(t)= \frac{d^k}{dt^k}\partial_t^{\rho-k} f(t).
$$
Note that if $\rho=k$, then fractional derivative coincides with the
ordinary classical derivative of order $k:$ $\partial_t^k f(t)
=\frac{d^k}{dt^k} f(t)$.

Let $0<\rho< 1$ be an unknown number to be determined.  Consider
the initial-boundary value problem
\begin{align}
\label{eq}
& \partial_t^\rho u(x,t) - A(x,D)u(x,t)  = 0, \quad x\in \Omega, \ 0<t\leq T,
\\
\label{bo}
& B u(x,t)\equiv\frac {\partial}{\partial n}u(x, t)+h(x)u(x, t) =0, \quad x\in \partial \Omega, \ t\geq 0,
\\
\label{in}
& \lim\limits_{t\rightarrow 0}\partial_t^{\rho-1} u(x,t) = \varphi(x), \quad x\in \overline{\Omega}.
\end{align}
Under some conditions on initial function $\varphi$ the solution
of this problem exists and is unique. This solution obviously
depends on $\rho$. The purpose of this paper is not only to find
the solution $u(x,t)$, but also to determine the order $\rho \in
(0, 1)$ of the time derivative. To do this one needs an extra
condition. As was mentioned above, different types of such
conditions were considered by a number of authors. 
We formulate our inverse problem in the following way. Let $\omega\in L_2(\Omega)$ be a
weight function with a property $||\omega||_{L_2(\Omega)}=1$. Is
it possible to determine the order of time fractional derivative
$0<\rho<1$ with the additional information
$$
\int\limits_{\Omega}u(x, t_0) \omega(x) dx=d_0,
$$
at a fixed time instant $t_0\geq 1$? This integral can be
considered as the average distribution of the quantity $u(t,x)$ over the
region $\Omega$ at the time instant $t=t_0$ with the weight
function $\omega$.

Below, using the classical Fourier method, we give
a positive answer to this question in the case when a weight
function is equal to the first eigenfunction of spectral problem
(\ref{seq}), (\ref{sbo}): $\omega(x) = v_1(x)$, i.e. when the
latter integral has the form

\begin{equation}\label{ex}
f(\rho; t_0)\equiv\int\limits_{\Omega}u(x, t_0) v_1(x) dx=d_0\neq
0.
\end{equation}
The quantity $f(\rho; t_0)$ is, in fact, the projection of the solution
$u(x,t_0)$ onto the first eigenfunction, and as is shown below, using the
Fourier method, under certain conditions,  $f(\rho; t_0)$ has a simple form $f(\rho; t_0)=d
t_0^{\rho-1}$, with a constant $d$ depending on $\rho$.

We call the initial-boundary value problem (\ref{eq}) - (\ref{in}) (for an arbitrary,  but fixed $0<\rho<1$)
the \textit{forward problem.} The initial-boundary value problem (\ref{eq})
- (\ref{in}) for an unknown $\rho,$ together with extra condition (\ref{ex}), is called an
\textit{inverse problem.}

\begin{defin} A pair $\{u(x,t), \rho\}$ of the function $u(x,t)$ and the parameter  $\rho$ with the properties
\begin{enumerate}
\item
$\rho \in (0, 1)$,
\item$\partial_t^\rho u(x,t), A(x, D)u(x,t)\in
C(\bar{\Omega}\times (0,\infty))$,
\item
$\partial_t^{\rho-1} u(x,t)\in C(\bar{\Omega}\times [0,\infty))$
\end{enumerate}
and satisfying all the conditions of
problem (\ref{eq}) - (\ref{ex}) in the classical sense is called
\textit{a classical solution} of inverse problem (\ref{eq}) -
(\ref{ex}).
\end{defin}

We will also call the classical solution simply the solution to
the inverse problem.
We draw attention to the fact, that in the definition of the
classical solution the requirement of continuity in the closed
domain of all derivatives included in equation (\ref{eq}) is not significant.
However, on the one hand, the uniqueness of such a solution is
proved quite simply, and on the other hand, the solution found by
the Fourier method satisfies the above conditions.

Further, let us denote by $\varphi_j$ the Fourier coefficients of the
function $\varphi(x)$ with respect to the system of eigenfunctions
$\{v_k(x)\}$, defined as a scalar product on $L_2(\Omega)$, i.e.
$\varphi_j=(\varphi, v_j)$. Let $D^{\alpha}$ stands for
$D_1^{\alpha_1}\cdots D_N^{\alpha_N}.$
For the seek of simplicity, we describe the proposed method, which is 
based on the classical Fourier method, for finding the order of
fractional differentiation in the case of $\lambda_1 = 0$ and
$\varphi_1\neq 0.$ Otherwise,  the method becomes
technically combersome.

Now we formulate the main result of this paper.

%%%%%%%%%%%%%%%%%%%%%%%%%%%%%%%%%%%%%%%%%%%%%%%%%%%%%%%%%

\begin{thm}
\label{thm_ip} Let the coefficients of operator $A(x, D)$ and function
$h(x)$ satisfy conditions (\ref{coe}) and let the first eigenvalue
of spectral problem (\ref{seq})-(\ref{sbo}) is equal to $0$:
$\lambda_1=0$ and $\varphi_1\neq 0$. Moreover, let the initial function
$\varphi(x)$ satisfy the conditions:
\begin{equation}\label{vp1}
\varphi(x)\in C^{\lfloor  \frac{N}{2} \rfloor }(\Omega),
\end{equation}
\begin{equation}\label{vp2}
D^\alpha \varphi(x)\in L_2(\Omega), \quad
|\alpha|=\big\lfloor \frac{N}{2} \big\rfloor +1,
\end{equation}
\begin{equation}\label{vp3}
B\varphi(x)=B A(x,D)\varphi(x)= \cdot\cdot\cdot\,
=BA^{\big\lfloor \frac{N}{4}\big\rfloor}(x,D)\varphi(x)=0, \quad x\in
\partial\Omega.
\end{equation}
Then inverse problem
(\ref{eq})- (\ref{ex}) has a unique solution $\{u(x,t), \rho \}$ if
and only if
\begin{equation}
\label{cond_100}
0<\frac{d_0}{\varphi_1} < 1.
\end{equation}
\end{thm}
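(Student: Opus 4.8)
The plan is to reduce the inverse problem to a single scalar equation for $\rho$ by the classical Fourier (eigenfunction expansion) method, and then to analyse that equation. First I would solve the forward problem (\ref{eq})--(\ref{in}) for an arbitrary but fixed $\rho\in(0,1)$ by separation of variables: writing $u(x,t)=\sum_{k\ge 1}T_k(t)v_k(x)$ and projecting onto $v_k$, each coefficient must satisfy the scalar fractional initial value problem $\partial_t^\rho T_k+\lambda_k T_k=0$, $\lim_{t\to 0}\partial_t^{\rho-1}T_k(t)=\varphi_k$, whose solution is $T_k(t)=\varphi_k\,t^{\rho-1}E_{\rho,\rho}(-\lambda_k t^\rho)$, with $E_{\rho,\rho}$ the two-parameter Mittag-Leffler function. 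The smoothness hypotheses (\ref{vp1})--(\ref{vp3}) are precisely what is needed (this is the content of Section~\ref{section_fp}) to guarantee that this series converges in the sense demanded by the Definition and is the \emph{unique} classical solution of the forward problem; the uniqueness part is the fact I would borrow.

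Second, I would evaluate the observation functional. Since $\{v_k\}$ is orthonormal in $L_2(\Omega)$, condition (\ref{ex}) reads $f(\rho;t_0)=(u(\cdot,t_0),v_1)=T_1(t_0)$. By hypothesis $\lambda_1=0$, and $E_{\rho,\rho}(0)=1/\Gamma(\rho)$, so
\[
f(\rho;t_0)=\frac{\varphi_1}{\Gamma(\rho)}\,t_0^{\,\rho-1}.
\]
Hence a pair $\{u,\rho\}$ solves the inverse problem if and only if $u$ is the (unique) forward solution corresponding to $\rho$ and
\[
\frac{t_0^{\,\rho-1}}{\Gamma(\rho)}=\frac{d_0}{\varphi_1},\qquad \rho\in(0,1).
\]
Everything therefore comes down to solvability and uniqueness of this one real equation, the division being legitimate because $\varphi_1\neq 0$, after which $u$ is pinned down by forward uniqueness.

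Third, I would study $g(\rho):=t_0^{\,\rho-1}/\Gamma(\rho)$ on $(0,1)$. It is continuous and positive there; the factor $t_0^{\,\rho-1}$ is nondecreasing in $\rho$ because $t_0\ge 1$, and $1/\Gamma(\rho)$ is strictly increasing on $(0,1)$ because $\Gamma$ is strictly decreasing on $(0,1)$, so $g$ is strictly increasing. At the endpoints, $\Gamma(\rho)\to+\infty$ as $\rho\to 0^+$ while $t_0^{\,\rho-1}\to t_0^{-1}$, hence $g(\rho)\to 0$; and $g(\rho)\to t_0^{0}/\Gamma(1)=1$ as $\rho\to 1^-$. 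Thus $g$ is a continuous strictly increasing bijection of $(0,1)$ onto $(0,1)$, so the displayed equation has a solution $\rho$ — automatically unique — exactly when $d_0/\varphi_1\in(0,1)$, which is (\ref{cond_100}); taking that $\rho$ and the associated forward solution yields the unique solution of the inverse problem, while if (\ref{cond_100}) fails no $\rho\in(0,1)$ can satisfy the equation.

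The genuinely technical point is not the monotonicity of $g$ — that is elementary, and it is here that the hypothesis $t_0\ge 1$ is used — but the justification, via Section~\ref{section_fp}, that a function satisfying only the mild regularity in the Definition of a classical solution must in fact coincide with the Fourier series above, so that its first Fourier coefficient at $t_0$ genuinely equals $T_1(t_0)$. This requires term-by-term differentiation of the series, controlled by the decay of $\varphi_k$ forced by (\ref{vp1})--(\ref{vp3}), together with a uniqueness argument for the forward problem (a standard completeness/energy argument exploiting the boundary form $B$ and the positivity $\lambda_k\ge 0$). I would also note that $d_0\neq 0$ is consistent with, and subsumed by, the strict inequality $0<d_0/\varphi_1$.
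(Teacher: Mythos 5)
Your proposal is correct and follows essentially the same route as the paper: reduce to the scalar equation $t_0^{\rho-1}/\Gamma(\rho)=d_0/\varphi_1$ via the forward solution $u=\sum_j \varphi_j t^{\rho-1}E_{\rho,\rho}(-\lambda_j t^{\rho})v_j$ and the identity $f(\rho;t_0)=\varphi_1 t_0^{\rho-1}/\Gamma(\rho)$, then use strict monotonicity and the endpoint limits $0$ and $1$. The only cosmetic difference is that you obtain monotonicity as a product of monotone factors (using that $\Gamma$ is strictly decreasing on $(0,1)$), whereas the paper differentiates and invokes $\Psi(\rho)<0$ --- the same underlying fact.
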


\begin{rem} 
\begin{enumerate}
\item
Conditions (\ref{vp1})-(\ref{vp3}) are standard for
the existence of a solution to the forward problem (see for
example, \cite{Ruz}). 
Condition $\lambda_1=0$ of the
theorem is satisfied, for example, in the case of the Neumann
condition on the boundary for the Laplace operator.
\label{rem1}
\item
Theorem defines the unique $ \rho $ from (\ref {ex}). Hence, if we
define the integral (\ref {ex}) at another time instant $ t_1 $
and get a new $\rho_1$, i.e. $f(\rho_1; t_1)=d_1$, then from the
equality $f(\rho_1; t_0)=d_0$, by virtue of the theorem, we obtain
$\rho_1 = \rho$.
\end{enumerate}
\end{rem}

\section{Forward problem}
\label{section_fp}

\setcounter{section}{3} \setcounter{equation}{0} %%%%%%%%%%%%%%%%%%%

\begin{defin} A function $u(x,t)$ with the properties
\begin{enumerate}
\item
$\partial_t^\rho u(x,t), A(x,D)u(x,t)\in C(\bar{\Omega}\times
(0,\infty))$,
\item
$\partial_t^{\rho-1} u(x,t)\in C(\bar{\Omega}\times
[0,\infty))$
\end{enumerate}
and satisfying all the conditions of problem
(\ref{eq}) - (\ref{in}) in the classical sense is called
\textit{a classical solution} of forward problem (\ref{eq}) -
(\ref{in}).
\end{defin}

In this section we prove existence and uniqueness of the solution
of the forward problem by the Fourier method.
In accordance with the Fourier method, we will look for a solution to
problem  (\ref{eq}) - (\ref{in}) in the form of a series:
\begin{equation}\label{u}
u(x,t) = \sum\limits_{j=1}^\infty T_j(t) v_j(x), \quad t>0, \ x \in \Omega,
\end{equation}
where functions $T_j(t)$ are solutions to the Cauchy type problem
\begin{equation}\label{ca}
\partial_t^\rho T_j + \lambda_j T_j =0,\quad \lim\limits_{t\rightarrow 0}\partial_t^{\rho-1}
T_j(t)=\varphi_j, \quad \forall j=1,\dots, \infty.
\end{equation}
The unique solution of problem (\ref{ca}) has the form (see, for
example, \cite{PSK}, p. 16)
\begin{equation}\label{t}
T_j(t)=\varphi_j t^{\rho-1} E_{\rho, \rho} (-\lambda_j t^\rho),
\end{equation}
where $E_{\rho, \mu}$ is the Mittag-Leffler function
\[
E_{\rho, \mu}(t)= \sum\limits_{k=0}^\infty \frac{t^k}{\Gamma(\rho
k+\mu)}.
\]

\begin{thm}\label{dp} Let conditions (\ref{coe}) and (\ref{vp1}) - (\ref{vp3})
be satisfied. Then there exists a unique solution of the forward
problem (\ref{eq}) - (\ref{in}) and it has the representation
\begin{equation}\label{u1}
u(x,t)=\sum\limits_{j=1}^\infty\varphi_j t^{\rho-1} E_{\rho, \rho}
(-\lambda_j t^\rho) v_j(x),
\end{equation}
which absolutely and uniformly converges on $x\in \bar{\Omega}$
 for each $t\in (0, T]$.
\end{thm}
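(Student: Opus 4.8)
The plan is to use the classical Fourier method. For uniqueness one projects a hypothetical classical solution onto the orthonormal eigenbasis $\{v_j\}$ and reduces to the scalar Cauchy-type problems (\ref{ca}); for existence one takes $u$ to be the series (\ref{u1}), assembled from the explicit solutions (\ref{t}), and checks -- using a Mittag-Leffler decay bound together with $L_2$-based elliptic regularity -- that it has all the regularity demanded by the definition of a classical solution.

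\emph{Uniqueness.} Suppose $u$ is a classical solution of (\ref{eq})--(\ref{in}) with $\varphi\equiv0$; showing $u\equiv0$ gives uniqueness after subtracting two solutions. Put $T_j(t)=(u(\cdot,t),v_j)$. Because $\partial_t^\rho u\in C(\bar\Omega\times(0,\infty))$ and $\partial_t^{\rho-1}u\in C(\bar\Omega\times[0,\infty))$, Fubini's theorem lets me pull $\partial_t^\rho$ (an integration in $t$ followed by $\frac{d}{dt}$) through the spatial scalar product, so $\partial_t^\rho T_j(t)=(\partial_t^\rho u,v_j)=(A(x,D)u,v_j)$. Green's formula and the symmetry of $A(x,D)$ under the boundary operator $B$ -- the boundary terms vanishing since $Bu=0$ and $Bv_j=0$ on $\partial\Omega$ -- convert this to $(u,A(x,D)v_j)=-\lambda_j T_j(t)$, and $\lim_{t\to0}\partial_t^{\rho-1}T_j(t)=(\varphi,v_j)=0$. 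Hence $T_j$ solves (\ref{ca}) with zero datum, so $T_j\equiv0$ by the quoted uniqueness for (\ref{ca}); completeness of $\{v_j\}$ in $L_2(\Omega)$ and continuity of $u$ then give $u\equiv0$.

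\emph{Existence.} Let $u$ be the series (\ref{u1}). Two standard ingredients are used: the Mittag-Leffler bound $|E_{\rho,\mu}(-s)|\le C_\rho/(1+s)$ for $s\ge0$, $\mu\in\{1,\rho\}$, $0<\rho<1$ (so in particular $|\lambda_j E_{\rho,\rho}(-\lambda_j t^\rho)|\le C_\rho t^{-\rho}$), and the $L_2$-based elliptic estimate $D((-A(x,D))^k)\subset H^{2k}(\Omega)$ combined with the Sobolev embedding $H^\sigma(\Omega)\hookrightarrow C(\bar\Omega)$ for $\sigma>N/2$, resp. $\hookrightarrow C^2(\bar\Omega)$ for $\sigma>N/2+2$. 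Conditions (\ref{vp1})--(\ref{vp3}) are precisely the regularity and compatibility hypotheses (cf. \cite{Ruz}) providing decay of the Fourier coefficients $\varphi_j$: iterating $\varphi_j=\lambda_j^{-1}(\varphi,-A(x,D)v_j)=\lambda_j^{-1}(-A(x,D)\varphi,v_j)$ -- legitimate because $B\varphi=BA(x,D)\varphi=\dots=BA^{\lfloor N/4\rfloor}(x,D)\varphi=0$ -- and using (\ref{vp1})--(\ref{vp2}) yields $\sum_j\lambda_j^{2\tau}|\varphi_j|^2<\infty$ for some $\tau>N/4$. Since $|T_j(t)|\le C_\rho|\varphi_j|t^{\rho-1}(1+\lambda_j t^\rho)^{-1}$, absorbing one power of $\lambda_j$ into $t^{-\rho}$ gives, for each fixed $t\in(0,T]$, $\sum_j(1+\lambda_j)^\sigma|T_j(t)|^2\le C t^{-2}\sum_j\lambda_j^{\sigma-2}|\varphi_j|^2<\infty$ for every $\sigma\le2\tau+2$; choosing $N/2<\sigma\le2\tau+2$ shows (\ref{u1}) converges in $H^\sigma(\Omega)\hookrightarrow C(\bar\Omega)$ -- in particular absolutely and uniformly on $\bar\Omega$ for that $t$, as asserted -- and, pushing to $\sigma>N/2+2$, that $u(\cdot,t)\in C^2(\bar\Omega)$ and lies in $D(-A(x,D))$; hence $A(x,D)u(\cdot,t)\in C(\bar\Omega)$, while membership in $D(-A(x,D))$ forces $Bu(\cdot,t)=0$, classically since $u(\cdot,t)\in C^2(\bar\Omega)$. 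Joint continuity in $(x,t)$ follows since these estimates are locally uniform in $t$ on $(0,\infty)$. The equation is then checked termwise, $\partial_t^\rho[T_j v_j]=(\partial_t^\rho T_j)v_j=-\lambda_j T_j v_j=T_j A(x,D)v_j=A(x,D)[T_j v_j]$ by (\ref{ca}) and $-A(x,D)v_j=\lambda_j v_j$, the termwise operations being legitimate by the uniform convergence above. Finally, from $\partial_t^{\rho-1}[t^{\rho-1}E_{\rho,\rho}(-\lambda t^\rho)]=E_{\rho,1}(-\lambda t^\rho)$ one gets $\partial_t^{\rho-1}u(x,t)=\sum_j\varphi_j E_{\rho,1}(-\lambda_j t^\rho)v_j(x)$; since $|E_{\rho,1}(-\lambda_j t^\rho)|\le C_\rho$, the same $H^\sigma$-argument (now with no negative power of $t$) gives uniform convergence on $\bar\Omega\times[0,\infty)$, hence continuity there and the value $\sum_j\varphi_j v_j(x)=\varphi(x)$ at $t=0$ -- i.e. the regularity $\partial_t^{\rho-1}u\in C(\bar\Omega\times[0,\infty))$ and the initial condition (\ref{in}).

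\emph{Main obstacle.} The delicate point is the uniform control of $u(\cdot,t)$, of $A(x,D)u(\cdot,t)$, and of the trace $Bu(\cdot,t)$ up to $\partial\Omega$ for $t>0$: a crude term-by-term treatment via pointwise eigenfunction bounds is too lossy in dimensions $N\ge4$, so the argument must be routed through the $L_2$-based elliptic and Sobolev estimates, and the whole construction hinges on the decay of $\varphi_j$ produced by the compatibility conditions (\ref{vp3}) on top of the smoothness (\ref{vp1})--(\ref{vp2}). Everything else is bookkeeping with the Mittag-Leffler bounds and the eigenfunction expansion, and the uniqueness part is entirely routine.
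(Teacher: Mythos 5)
Your proposal is correct and follows essentially the same route as the paper: both reduce existence to the decay estimate $\sum_j(\lambda_j+1)^{2\tau}|\varphi_j|^2<\infty$ ($\tau>N/4$) furnished by (\ref{vp1})--(\ref{vp3}), control the series via the Mittag-Leffler bound $|E_{\rho,\rho}(-s)|\le C/(1+s)$ at the cost of a harmless $t^{-2}$ factor, and pass from $L_2$-convergence to uniform convergence of $D^\alpha u$ on $\bar\Omega$ -- you via elliptic regularity plus Sobolev embedding, the paper via the equivalent Krasnoselski-type lemma on $D^\alpha(\hat A+I)^{-\sigma}:L_2(\Omega)\to C(\bar\Omega)$. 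Your explicit verification of the initial condition through $\partial_t^{\rho-1}\bigl[t^{\rho-1}E_{\rho,\rho}(-\lambda t^\rho)\bigr]=E_{\rho,1}(-\lambda t^\rho)$ and your written-out uniqueness argument are just fuller versions of steps the paper delegates to references.
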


\begin{proof}
The uniqueness of the solution can be proved by the standard
technique based on completeness of the set of eigenfunctions
$\{v_k(x)\}$  in $L_2(\Omega)$ (see, for example, \cite{Ruz})

To prove the existence we need to introduce for any real number
$\tau$ an operator $\hat{A}^\tau$, acting in $L_2(\Omega)$ in the
following way
\[
\hat{A}^\tau g(x)= \sum\limits_{k=1}^\infty \lambda_k^\tau g_k
v_k(x), \quad g_k=(g,v_k).
\]
Obviously, the operator $\hat{A}^\tau$ with the domain of definition
\[
D(\hat{A}^\tau)=\{g\in L_2(\Omega):  \sum\limits_{k=1}^\infty
\lambda_k^{2\tau} |g_k|^2 < \infty\}
\]
is selfadjoint. If we denote by $A$ the operator in $L_2(\Omega)$,
acting as
\[
A v(x)=A(x,D) v (x)
\]
and with the domain of definition
\[
D(A)=\{v\in C^2(\bar{\Omega}): Bv(x)=0, \quad x\in \partial\Omega\},
\]
then it is not hard to show, that the operator $\hat{A}\equiv\hat{A}^1$
is the selfadjoint extension  of  the operator $A$ in $L_2(\Omega)$. In
the same way one can define the operator $(\hat{A}+I)^\tau,$ where $I$ is the identity operator in $L_2(\Omega).$

Further, we use the following lemma (see \cite{Kra}, p. 453):

\begin{lem}\label{CL} Let $\sigma > 1+\frac{N}{4}$. Then for any multi-index $\alpha$ satisfying $|\alpha|\leq 2$
the operator $D^\alpha (\hat{A}+I)^{-\sigma}$ (completely)
continuously maps the space $L_2(\Omega)$ into $C(\bar{\Omega}),$ and
moreover, the following estimate holds
\begin{equation}
\label{CL1} \|D^\alpha (\hat{A}+I)^{-\sigma} g\|_{C(\Omega)} \leq
C \|g\|_{L_2(\Omega)}.
\end{equation}
\end{lem}

Let $|\alpha|\leq 2$. First we prove that one can validly apply the
operators $D^\alpha$ and $\partial^\rho_t$ to the series
in (\ref{u1}) term-by-term.
Suppose that the  function $\varphi(x)$ satisfies the following condition
for some $\tau> \frac{N}{4}:$
\begin{equation}\label{vp}
\sum\limits_1^\infty (\lambda_j+1)^{2\tau} |\varphi_j|^2 \leq
C_\varphi<\infty.
\end{equation}
Consider the sum
\begin{equation}\label{S}
S_k(x,t)=\sum\limits_{j=1}^k v_j(x)\varphi_jt^{\rho-1}
E_{\rho,\rho}(-\lambda_j t^\rho).
\end{equation}
Since $(\hat{A}+I)^{-\tau-1} v_j(x) = (\lambda_j+1)^{-\tau-1}
v_j(x)$, we can rewrite the latter in the form
\[
S_k(x,t)=(\hat{A}+I)^{-\tau-1}\sum\limits_{j=1}^k
v_j(x)(\lambda_j+1)^{\tau+1}\varphi_jt^{\rho-1}
E_{\rho,\rho}(-\lambda_j t^\rho).
\]
Therefore, by virtue of Lemma \ref{CL}, one has
\begin{align} \notag
\left\|D^\alpha S^1_k \right\|_{C(\Omega)} & =\left\|D^\alpha
(\hat{A}+I)^{-\tau-1}\sum\limits_{j=1}^k
v_j(x)(\lambda_j+1)^{\tau+1}\varphi_jt^{\rho-1}
E_{\rho,\rho}(-\lambda_j t^\rho)\right\|_{C(\Omega)}
\\
\label{S1}
&\leq C \left\|\sum\limits_{j=1}^k
v_j(x)(\lambda_j+1)^{\tau+1}\varphi_jt^{\rho-1}
E_{\rho,\rho}(-\lambda_j t^\rho)\right\|_{L_2(\Omega)}.
\end{align}
Using the orthonormality of the system $\{v_j\}$, we have
\begin{equation}
\label{S2}
\left\|D^\alpha S_k\right\|^2_{C(\Omega)}\leq C \sum\limits_{j=1}^k
\left| (\lambda_j+1)^{\tau+1}\varphi_jt^{\rho-1}
E_{\rho,\rho}(-\lambda_j t^\rho) \right|^2.
\end{equation}
For the Mittag-Leffler function with a negative argument we have
an estimate  (see, for example, \cite{PSK},  p.13)
\[
\left|E_{\rho,\rho}(-t) \right| \leq \frac{C}{1+ t}, \quad t>0.
\]
Applying this inequality we have
\begin{align}
\sum\limits_{j=1}^k  & \left|(\lambda_j+1)^{\tau+1} \varphi_jt^{\rho-1}
E_{\rho,\rho}(-\lambda_j t^\rho) \right|^2                             
=\sum\limits_{\lambda_j<t^{-\rho}}
\left| (\lambda_j+1)^{\tau+1}\varphi_jt^{\rho-1}
E_{\rho,\rho}(-\lambda_j t^\rho) \right|^2                               \notag
\\ &
+
\sum\limits_{\lambda_j>t^{-\rho}}
\left|(\lambda_j+1)^{\tau+1}\varphi_jt^{\rho-1}
E_{\rho,\rho}(-\lambda_j t^\rho) \right|^2                              \notag
\\  & \leq
C t^{-2}(1+t^\rho)^2\sum\limits_{j=1}^k
(\lambda_j+1)^{2\tau}|\varphi_j|^2 \leq C
t^{-2}(1+T^\rho)^2C_\varphi.                                   \label{est_10}
\end{align}
Here we used the inequality $E_{\rho,\rho}(-\lambda_j t^\rho)<C$ in the case $\lambda_j<t^{-\rho},$ and the inequality 
 $E_{\rho,\rho}(-\lambda_j t^\rho)<C\frac{1}{\lambda_j t^\rho}$ in the case $\lambda_j>t^{-\rho}.$
Taking into account \eqref{est_10}, one can rewrite the estimate (\ref{S2}) as
\[
||D^\alpha S^1_k||^2_{C(\Omega)}\leq C
t^{-2}(1+T^\rho)^2C_\varphi.
\]
This implies uniform convergence on $x\in\bar{\Omega}$  of the
differentiated sum (\ref{S}) with respect to variables $x_j, \ j=1,\dots,N,$
for each $t\in (0,T]$. On the other hand, the sum (\ref{S1})
converges for any permutation of its members, as well, since these terms
are mutually orthogonal. This implies the absolute convergence of
the differentiated sum (\ref{S}) on the same interval $t\in
(0,T]$.

Further, it is not hard to see that
\begin{align}
\partial_t^\rho\sum\limits_1^k T_j(t)v_j(x) & =  -\sum\limits_1^k \lambda_j
T_j(t)v_j(x)                  \notag
\\
&=-A(x,D)(\hat{A}+1)^{-\tau-1}\sum\limits_1^k
(1+\lambda)^{\tau+1}_j T_j(t)v_j(x).             \notag
\end{align}
Absolute and uniform convergence of the latter series can be
proved as above.

Obviously, the function in (\ref{u1}) satisfies boundary conditions
(\ref{bo}). Considering the initial condition as (see, for
example, \cite{PSK} p. 104)
\begin{equation}\label{in1}
\lim\limits_{t\rightarrow
0}t^{1-\rho}u(x,t)=\frac{\varphi(x)}{\Gamma(\rho)},
\end{equation}
it is not hard to verify, that this condition is also satisfied.

Hence, if the function $\varphi(x)$ satisfies condition
(\ref{vp}), then all the statements of Theorem \ref{dp} hold.
As is shown in work \cite{Il} by V.A. Il'in (see also
\cite{Lad} p. 111) the fulfillment of conditions
(\ref{coe})-(\ref{vp2}) guarantee the convergence of the series in
(\ref{vp}). Thus, Theorem \ref{dp} is completely proved.
\end{proof}

%%%%%%%%%%%%%%%%%%%%%%%%
%%%%%%%%%%%%%%%%%%%%%%%%

\section{The inverse problem. Proof of Theorem \ref{thm_ip}}
\label{ip}

\setcounter{section}{4} \setcounter{equation}{0} %%%%%%%%%%%%%%%%%%%%%%%%%%%%%

In this section we prove the main result. First we prove the following auxiliary lemma:

\begin{lem}\label{f}
Let the first eigenvalue of the operator $A(x,D)$ iz zero and the Fourier coefficient $\varphi_1$ of $\varphi(x)$ is not zero. Then $f(\rho; t_0)$  as a function of $\rho \in (0,1)$ is strictly monotone: if $\varphi_1 > 0$, then $f(\rho;t_0)$ increases, and if
$\varphi_1 < 0$, then  $f(\rho; t_0)$ decreases. Moreover
\begin{equation}\label{>0}
\lim\limits_{\rho\rightarrow + 0} f(\rho; t_0)=0, \quad f(1;
t_0)=\varphi_1.
\end{equation}
\end{lem}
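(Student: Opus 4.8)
The plan is to compute $f(\rho;t_0)$ in closed form using the series representation of the forward solution from Theorem~\ref{dp}, and then to read off the monotonicity and the boundary values from elementary properties of $t_0^{\rho-1}$ and of $1/\Gamma$. First I would substitute the representation (\ref{u1}) into the definition (\ref{ex}). Since, by Theorem~\ref{dp}, the series (\ref{u1}) converges uniformly on $\bar{\Omega}$ for the fixed instant $t=t_0$, it may be integrated against $v_1$ term by term; by orthonormality of $\{v_j\}$ in $L_2(\Omega)$ all terms vanish except the one with $j=1$, which gives
\[
f(\rho;t_0)=\varphi_1\, t_0^{\rho-1}\, E_{\rho,\rho}(-\lambda_1 t_0^\rho).
\]
Using the hypothesis $\lambda_1=0$ together with $E_{\rho,\rho}(0)=1/\Gamma(\rho)$, this reduces to
\[
f(\rho;t_0)=\frac{\varphi_1\, t_0^{\rho-1}}{\Gamma(\rho)},\qquad \rho\in(0,1),
\]
which is exactly the announced simple form $f(\rho;t_0)=d\,t_0^{\rho-1}$ with $d=\varphi_1/\Gamma(\rho)$.

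For the monotonicity, write $f(\rho;t_0)=\varphi_1\, g(\rho)$ with $g(\rho):=t_0^{\rho-1}/\Gamma(\rho)$, which is positive on $(0,1)$ since $t_0>0$ and $\Gamma(\rho)>0$. I would show $g$ is strictly increasing on $(0,1)$. Because $t_0\ge 1$, the factor $t_0^{\rho-1}=e^{(\rho-1)\ln t_0}$ is nondecreasing in $\rho$. The factor $1/\Gamma(\rho)$ is \emph{strictly} increasing on $(0,1)$: indeed $\Gamma'(\rho)=\Gamma(\rho)\psi(\rho)$, where $\psi=\Gamma'/\Gamma$ is the digamma function, and $\psi$ is strictly increasing with $\psi(1)=-\gamma<0$, so $\psi(\rho)<0$ and hence $\Gamma'(\rho)<0$ on $(0,1)$. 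The product of a positive nondecreasing function and a positive strictly increasing function is strictly increasing, so $g$ is strictly increasing on $(0,1)$; consequently $f(\rho;t_0)$ increases if $\varphi_1>0$ and decreases if $\varphi_1<0$. (Equivalently, logarithmic differentiation gives $g'(\rho)/g(\rho)=\ln t_0-\psi(\rho)>0$ on $(0,1)$.)

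Finally, for (\ref{>0}): as $\rho\to 0^+$ one has $\Gamma(\rho)=\Gamma(\rho+1)/\rho\to+\infty$ while $t_0^{\rho-1}\to t_0^{-1}$ remains bounded, so $g(\rho)\to 0$ and thus $\lim_{\rho\to+0}f(\rho;t_0)=0$; at $\rho=1$ we have $\Gamma(1)=1$ and $t_0^{0}=1$, so $f(1;t_0)=\varphi_1$. The only step that is not routine bookkeeping is the strict monotonicity of $1/\Gamma$ on $(0,1)$ (equivalently, the negativity of $\psi$ there), but this is a standard fact about the Gamma function; note that the hypothesis $t_0\ge 1$ is precisely what prevents the factor $t_0^{\rho-1}$ from working against it.
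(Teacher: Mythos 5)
Your proposal is correct and follows essentially the same route as the paper: term-by-term integration against $v_1$ plus orthonormality gives $f(\rho;t_0)=\varphi_1 t_0^{\rho-1}/\Gamma(\rho)$, and the logarithmic derivative $\ln t_0-\Psi(\rho)>0$ (using $\Psi<0$ on $(0,1)$ and $t_0\ge 1$) is exactly the paper's monotonicity argument. The boundary values are handled the same way, so there is nothing to add.
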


\begin{proof}
Since the system of eigenfunctions $\{v_j(x)\}$ are orthonormal
and $\lambda_1 =0$, then from (\ref{u1}) one has
\begin{equation} \label{frho_1}
f(\rho; t_0) = \varphi_1t_0^{\rho-1} E_{\rho,
\rho}(0)=\frac{\varphi_1 t_0^{\rho-1}}{\Gamma (\rho)}.
\end{equation}
Let $\Psi(\rho)$ be the logarithmic derivative of the gamma
function $\Gamma(\rho)$ (see, for example, \cite{Bat}). Then
$\Gamma'(\rho) = \Gamma (\rho) \Psi(\rho)$, and for $\rho \in
(0,1)$ we have $\Gamma(\rho)>0$ and $\Psi(\rho)<0$. Therefore,
$$
\frac{d}{d\rho}\bigg(\frac{t_0^{\rho-1}}{\Gamma(\rho)}\bigg)=\frac{t_0^{\rho-1}}{\Gamma(\rho)}\big[\ln
t_0- \Psi(\rho)\big] >0.
$$
for $t_0>1$ (the case $t_0=1$ is obvious). Thus function  $f(\rho;
t_0)$ increases or decreases depending on sign of $\varphi_1$. It
is easy to verify equalities (\ref{>0}).

\end{proof}

{\it Proof of Theorem \ref{thm_ip}}.  First we show existence of the
order of the fractional derivative $\rho$, which satisfies
condition (\ref{ex}). We have
$$
f(\rho; t_0)=\int\limits_\Omega u(x, t_0) v_1(x) dx = d_0.
$$
It follows from Lemma \ref{f} and representation \eqref{frho_1} immediately, that if
$$
0< \frac{d_0}{\varphi_1} < 1,
$$
then there exists a unique $\rho$, which satisfies condition
(\ref{ex}).

To prove the uniqueness of a solution of inverse problem
(\ref{eq})-(\ref{ex}) we suppose that there exist two pairs of
solutions $\{u_1, \rho_1\}$ and $\{u_2, \rho_2\}$ such, that
$0<\rho_k < 1, \ k=1,2,$ and
\begin{equation}\label{eq1}
\partial_t^{\rho_k} u_k(x,t) - A(x,D)u_{k}(x,t) = 0,  \quad k=1,2, \quad x\in \Omega, \ 0< t\leq T;
\end{equation}
\begin{equation}\label{in1}
\lim\limits_{t\rightarrow 0}\partial_t^{\rho_k-1} u_k(x,t) =
\varphi(x), \quad k=1,2, \quad x\in \overline{\Omega};
\end{equation}
\begin{equation}\label{bo1}
B u_k(x,t) =0, \quad k=1,2,  \quad x\in \partial\Omega, \ 0\leq t\leq T.
\end{equation}
Consider the following functions
$$
w^j_k(t)=\int\limits_\Omega u_k(x,t) v_j(x)dx, \quad k=1,2, \quad
j=1,2,\cdot\cdot\cdot
$$
Then, for each $j=1,2, \dots,$ we have
$$
\partial_t^{\rho_k} w^j_k(t) +\lambda_j w^j_k(t)=0, \quad \lim\limits_{t\rightarrow 0}\partial_t^{\rho_k-1} w^j_k(t) =
\varphi_j, \quad k=1,2.
$$
Therefore (see (\ref{t})), for each $j=1,2,\dots,$
$$
w_k^j(t)=\varphi_j t^{\rho_k-1}E_{\rho_k, \rho_k} (-\lambda_jt^{\rho_k}), \quad k=1,2,
$$
and condition (\ref{ex}) implies $w^1_1(t_0)=w^1_2(t_0)$. Since
$\lambda_1=0$ we obtain
$$
\varphi_1 t_0^{\rho_1-1}E_{\rho_1, \rho_1} (0)= \varphi_1
 t_0^{\rho_2-1}E_{\rho_2, \rho_2}(0)=\rho_0.
$$
As we have seen above (see Lemma \ref{f}), this equation yields
$\rho_1=\rho_2$. But in this case $w^j_1(t)=w^j_2(t)$ for all $t$
and $j$. Hence
$$
\int\limits_\Omega [u_1(x,t)-u_2(x,t)] v_j(x) dx=0
$$
for all $j$. Since the set of eigenfunctions $\{v_j\}$ is complete
in $L_2(\Omega)$, then we finally have $u_1(x,t)=u_2(x,t)$.
Thus, "if part" of the theorem is proved. 

To prove "only if" part of the theorem assume that condition \eqref{cond_100} is not verified. In this case, as it follows evidently from  representation \eqref{frho_1}, equation $f(\rho; t_0)=d_0$ has no solution on the interval $(0,1).$ Hence, in this case the inverse problem does not have a solution. The proof of Theorem \ref{thm_ip} is complete.

As an example of application of Theorem \ref{thm_ip} consider the following
initial-boundary value problem for one-dimensional diffusion
equation
\begin{equation}\label{eq1}
\partial_t^\rho u(x,t) - u_{xx}(x,t) = 0, \quad x\in (0, \pi), \quad t>0;
\end{equation}
with the  initial condition
\begin{equation}\label{in1}
\lim\limits_{t\rightarrow 0}\partial_t^{\rho-1} u(x,t) =
\varphi(x), \quad x\in [0, \pi];
\end{equation}
and the  boundary condition
\begin{equation}\label{bo1}
u_x(0,t) =0, \quad u_x(\pi,t) =0, \quad t\geq 0.
\end{equation}
where $0<\rho < 1$. In this case the
corresponding spectral problem has the
set of eigenfunctions $\{ \cos kx\}$ complete in $L_2(0, \pi)$, and eigenvalues $k^2$, $k=0,
1, \cdots.$ Note that the first eigenvalue in this case is $\lambda_1=0$ and the corresponding
eigenfunction is $v_1(x)=1$. Therefore, condition (\ref{ex}) takes the
form
\begin{equation}\label{ex1}
\int\limits_0^\pi u(x,t_0)dx =d_0, \quad t_0\geq 1.
\end{equation}

\begin{thm}\label{uniq} Let $\varphi\in C^1[0,\pi]$ and $\varphi_x(0)=\varphi_x(0)=0$. If
$$
0< \frac{d_0}{\varphi_1}<1,
$$
where $\varphi_1=\int\limits_0^{\pi} \varphi (x) dx,$ then there exists a unique solution $\{u(x,t), \rho\}$ to inverse
problem (\ref{eq1}) - (\ref{ex1}). Moreover, for the solution the
representation
$$
u(x,t)=\sum\limits_{j=1}^\infty\varphi_j t^{\rho-1} E_{\rho, \rho}
(-(j-1)^2 t^\rho) \cos (j-1) \, x \quad t>0, \ x \in [0,\pi],
$$
holds, where $\rho$ is the unique root of the algebraic equation
\[
\frac{t_0^{\rho-1}}{\Gamma(\rho)}= \frac{d_0}{\varphi_1}.
\]

\end{thm}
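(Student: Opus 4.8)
\textbf{Proof plan for Theorem \ref{uniq}.}
The plan is to obtain this statement as a direct specialization of Theorems \ref{dp} and \ref{thm_ip} to the concrete one-dimensional Neumann problem. First I would identify the spectral data: the operator is $A(x,D)u = u_{xx}$ on $(0,\pi)$ with Neumann boundary conditions $u_x(0,t)=u_x(\pi,t)=0$, so $h(x)\equiv 0$, $c(x)\equiv 0$, $a_{1,1}(x)\equiv 1$, and all the smoothness requirements \eqref{coe} are trivially met. The eigenpairs are $\lambda_k=(k-1)^2$ with orthonormalized eigenfunctions $v_1(x)=1/\sqrt\pi$ (or, in the unnormalized convention used in the statement, $v_1(x)=1$), $v_k(x)=\sqrt{2/\pi}\,\cos(k-1)x$ for $k\ge 2$. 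In particular $\lambda_1=0$, which is the hypothesis needed for both earlier results, and condition \eqref{ex} indeed collapses to \eqref{ex1} up to the harmless normalization constant absorbed into $d_0$ and $\varphi_1$. Here $\varphi_1 = \int_0^\pi \varphi(x)\,dx$, and the assumption $\varphi_1\neq 0$ together with $0<d_0/\varphi_1<1$ matches \eqref{cond_100}.

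Next I would check that the regularity hypotheses on $\varphi$ reduce, in dimension $N=1$, to exactly what is assumed in the theorem. Since $\lfloor N/2\rfloor = 0$ and $\lfloor N/4\rfloor = 0$, condition \eqref{vp1} becomes $\varphi\in C^0[0,\pi]$, condition \eqref{vp2} becomes $\varphi' \in L_2(0,\pi)$, and the chain \eqref{vp3} reduces to the single requirement $B\varphi = 0$, i.e. $\varphi_x(0)=\varphi_x(\pi)=0$ (there is an evident typo ``$\varphi_x(0)=\varphi_x(0)=0$'' in the statement). All of these are implied by $\varphi\in C^1[0,\pi]$ with $\varphi_x(0)=\varphi_x(\pi)=0$. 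Hence Theorem \ref{dp} applies and gives the unique classical solution of the forward problem in the series form \eqref{u1}, which in this setting reads
\[
u(x,t)=\sum_{j=1}^\infty \varphi_j\, t^{\rho-1} E_{\rho,\rho}\big(-(j-1)^2 t^\rho\big)\cos(j-1)x,
\]
absolutely and uniformly convergent on $[0,\pi]$ for each $t\in(0,T]$; this is precisely the representation asserted.

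Then I would invoke Theorem \ref{thm_ip}: its hypotheses ($\lambda_1=0$, $\varphi_1\neq0$, and \eqref{vp1}--\eqref{vp3}) are all verified above, so the inverse problem \eqref{eq1}--\eqref{ex1} has a unique solution $\{u(x,t),\rho\}$ if and only if $0<d_0/\varphi_1<1$, which is the standing assumption. Finally, to pin down $\rho$ explicitly, I would use formula \eqref{frho_1} from Lemma \ref{f}: since $\lambda_1=0$ and $E_{\rho,\rho}(0)=1/\Gamma(\rho)$, the projection of the solution onto $v_1$ equals $f(\rho;t_0)=\varphi_1 t_0^{\rho-1}/\Gamma(\rho)$, so condition \eqref{ex1} is equivalent to the scalar equation $t_0^{\rho-1}/\Gamma(\rho)=d_0/\varphi_1$; by the strict monotonicity established in Lemma \ref{f} together with the limits \eqref{>0}, this equation has exactly one root $\rho\in(0,1)$ under \eqref{cond_100}. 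I do not expect a serious obstacle here, since everything is a corollary of the general theory; the only points requiring a little care are the bookkeeping of the normalization constant in $v_1$ (which is why the hypothesis is phrased with $\varphi_1=\int_0^\pi\varphi$ rather than the $L_2$-normalized coefficient) and the verification that the low-dimensional floor functions genuinely reduce \eqref{vp1}--\eqref{vp3} to the stated $C^1$ condition.
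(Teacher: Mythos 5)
Your proposal is correct and follows exactly the route the paper takes: the paper's entire proof is the single sentence that the result ``immediately follows from Theorem \ref{thm_ip},'' and your write-up simply supplies the (accurate) verification of the spectral data, the reduction of conditions (\ref{vp1})--(\ref{vp3}) in dimension $N=1$, and the derivation of the scalar equation for $\rho$ that the authors leave implicit. No gaps; your version is just a fully spelled-out form of the paper's one-line argument.
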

The proof of this theorem immediately follows  from Theorem
\ref{thm_ip}.

\bibliographystyle{amsplain}

\end{document}